\numberwithin{equation}{section}
\definecolor{myred}{rgb}{0.75,0,0}
\definecolor{mygreen}{rgb}{0,0.5,0}
\definecolor{myblue}{rgb}{0,0,0.65}
    \def\AM{{\mathbb{A}}}
    \def\CM{{\mathbb{C}}}
    \def\FM{{\mathbb{F}}}
    \def\QM{{\mathbb{Q}}}
    \def\ZM{{\mathbb{Z}}}
    \def\HC{{\mathcal{H}}}
    \def\LC{{\mathcal{L}}}
    \def\RC{{\mathcal{R}}}
\def\ES{{\EuScript E}}
\def\FS{{\EuScript F}}
\def\LS{{\EuScript L}}
\def\a{\alpha}
\def\l{\lambda}
\newcommand{\nc}{\newcommand} \newcommand{\renc}{\renewcommand}
\newcommand{\rdots}{\mathinner{ \mkern1mu\raise1pt\hbox{.}
    \mkern2mu\raise4pt\hbox{.}
    \mkern2mu\raise7pt\vbox{\kern7pt\hbox{.}}\mkern1mu}}
\def\un{\underline}
\def\to{\rightarrow}
\def\longto{\longrightarrow}
\nc{\triright}{\stackrel{[1]}{\to}}
\nc{\longtriright}{\stackrel{[1]}{\longto}}
\nc{\Hb}{H^\bullet}
\nc{\Br}{\mathcal{B}}
\nc{\HotRR}{{}_R\mathcal{K}_R}
\nc{\HotR}{\mathcal{K}_R}
\nc{\excise}[1]{}
\nc{\defect}{\text{df}}
\nc{\h}[1]{\underline{H}_{#1}}
\nc{\Ga}{\mathbb{G}_a} % additive group
\nc{\Gm}{\mathbb{G}_m} % multiplicative group
\nc{\Perv}{{\mathbf{P}}}
\nc{\IH}{{\mathrm{IH}}}
\nc{\ic}{\mathbf{IC}}
\nc{\gl}{{\mathfrak{gl}}}
\renc{\sl}{{\mathfrak{sl}}}
\renc{\sp}{{\mathfrak{sp}}}
\renc{\Im}{\textrm{Im}}
\nc{\HBM}{H^{BM}}
 \DeclareMathOperator{\ch}{ch}
\DeclareMathOperator{\rank}{rank}
\newtheorem{thm}{Theorem}[section]
\newtheorem{lem}[thm]{Lemma}
\newtheorem{prop}[thm]{Proposition}
\newtheorem{cor}[thm]{Corollary}
\theoremstyle{definition}
\newtheorem{Eg}[thm]{Example}
\newtheorem{ex}[thm]{Exercise}
\theoremstyle{remark}
\newtheorem{remark}[thm]{Remark}
\newtheorem{warning}[thm]{Warning}
\newtheorem{question}[thm]{Question}
\nc{\simto}{\stackrel{\sim}{\to}}
\title{A reducible characteristic variety in type $A$}
\author{Geordie Williamson}
\address{Max-Planck-Institut f\"ur Mathematik, Vivatsgasse 7, 53111 Bonn, Germany}
\email{geordie@mpim-bonn.mpg.de}
\urladdr{http://people.mpim-bonn.mpg.de/geordie/}
\begin{document}

\begin{abstract}
We show that simple highest weight modules for
$\mathfrak{sl}_{12}(\CM)$ may have reducible characteristic
variety. This answers a question of Borho-Brylinski and Joseph from 1984.
The relevant singularity under Beilinson-Bernstein localization is the (in)famous Kashiwara-Saito
singularity. We sketch the rather indirect route via the
$p$-canonical basis, $W$-graphs and decomposition numbers for perverse sheaves
that led us to examine this singularity. 
\end{abstract}

\maketitle

\begin{center}
 \emph{Dedicated to David Vogan on the occasion of his $60^{th}$ birthday.}
\end{center}

\section{Introduction}

Let $G \supset B \supset T$ denote respectively a complex reductive
group, a Borel subgroup and maximal torus. Let $W$ denote its Weyl
group, $X = G/B$ the flag variety and $T^*X$ its cotangent
bundle. Given $x \in W$ we denote by $C_x$ the corresponding Schubert
cell and $T_x^* X \subset T^*X$ its conormal bundle. Let  $D_X$ denote
the sheaf of algebraic differential operators on $X$ and by $\LC_y$
the IC extension of the trivial local system on $C_y$. We can write
the characteristic cycle of $\LC_y$ as
\[
CC(\LC_y) = \sum_{x \in W} m_{x,y} [\overline{T_{x}^*X}].
\]
We have $m_{x,y} \in \ZM_{\ge 0}$ and $m_{x,y} = 0$ unless $x \le y$ in the
Bruhat order. The calculation of the multiplicities
$m_{x,y}$ is an important and difficult problem.
The question we address in this note is:

\begin{question} \label{q}
(See \cite[Conjecture 4.5]{BoBr} and \cite[\S 10.2]{Joseph})
Suppose that $G= SL_n(\CM)$.
Is $m_{x,y} = 0$ if $x \ne y$
  and $x$ and $y$ lie in the same two-sided Kazhdan-Lusztig cell? \end{question}

This question is equivalent to asking
whether the characteristic variety of a simple highest weight module
for $\sl_n(\CM)$ is irreducible \cite[Proposition 6.9]{BoBr}. (A sketch: if $\pi : T^*(G/B) \to
\sl_n(\CM)^*$ denotes the moment map then the characteristic variety of
the global sections of $\LC_y$ (a simple highest weight module) agrees with the image of the
characteristic variety of $\LC_y$ under $\pi$ \cite[Corollary 1.5]{BoBr}. The condition on two-sided cells occurs because if $x <_{LR} y$ ($\le_{LR}$ denotes the Kazhdan-Lusztig
two-sided cell preorder) then 
$\pi(\overline{T^*_xG/B})$ has strictly smaller dimension than $\pi( \overline{T^*_y G/B})$ and
hence cannot contribute a reducible component, because
characteristic varieties of simple modules are equidimensional \cite{Gabber}.)
It is known that reducible characteristic varieties occur in other
types (e.g. $B_2$, $B_3$, $C_3$) thanks to calculations of
Kashiwara and Tanisaki \cite{KT} and Tanisaki \cite{T}.

Kazhdan and Lusztig conjectured (still for $G = SL_n(\CM)$) that the
characteristic varieties of all $\LC_y$ are irreducible \cite{KLCC} (that is, that 
$m_{x,y} = 0$ if $x \ne y$). Of course this
would imply an affirmative answer to the above question. However
Kashiwara and Saito \cite{KS} showed that their conjecture was true if $n < 8$
but false for $n \ge 8$. They discovered a singularity
(the \emph{Kashiwara-Saito singularity}) which occurs as a
normal slice to a Schubert variety in the flag variety of $SL_8(\CM)$,
and for which the
characteristic variety is reducible. In their example $x$ and $y$ do not
lie in the same two-sided cell, and hence do not provide an example of
a reducible characteristic variety of a highest weight module.

In this note we give two permutations $x \le y$ in $S_{12}$ which
lie in the same right cell and such that a normal slice to the
Schubert variety corresponding to $y$ along the Schubert cell
corresponding to $x$ is isomorphic to the Kashiwara-Saito
singularity. This implies that $m_{x,y} \ne 0$, and hence that
Question \ref{q} has a negative answer.

\subsection{Structure of the paper}
In \S\ref{sec:motivation} we discuss the $p$-canonical basis and prove
a result relating characteristic cycle multiplicities and the $p$-canonical
basis. This result is a simple consequence of an observation of
Vilonen and the author
\cite{VW}. We then discuss how positivity properties of the
$p$-canonical basis and computer code of Howlett-Nguyen allows one to
narrow the search for potential counter-examples.
(Indeed, with
12! = 479 001 600 Schubert varieties in the flag variety of $GL_{12}$, the challenge is in the
finding rather than the verifying!)
In \S\ref{sec:calculation} we give the singularity in
the $GL_{12}$ flag variety and perform a straightforward
calculation to obtain the
Kashiwara-Saito singularity.

\subsection{Comments on the literature}

In \cite{Mel} a proof is proposed for the irreducibility of
characteristic varieties in type $A$. As we have already remarked,
this would imply that Question
\ref{q} has a positive answer. The results of this paper 
contradict \cite[Proposition 3.2]{Mel} and it is not clear to the
author how this proposition follows from the results of \cite{Joseph}. A statement
equivalent to \cite[Proposition 3.2]{Mel} is made in the remark
on page 54 of \cite{BoBr}.

\subsection{Acknowledgements}
This paper also owes a significant debt to Leticia Barchini who asked
me repeatedly about Question \ref{q}, and answered 
questions during and following a visit to the MPI last
year. Thanks also to Peter Trapa for some explanations and Anna
Melnikov, Yoshihisa
Saito and Toshiyuki Tanisaki for useful correspondence. The examples were found using Howlett and
Nguyen's software \cite{HN} for magma \cite{magma} which produces the irreducible W-graphs for the symmetric group,
implementing an algorithm described in \cite[\S 6]{HNPaper}.

During a visit to MIT last year 
David Vogan asked me whether the results of
\cite{VW} could produce new examples of reducible characteristic cycles, and
asked about Question \ref{q}. 
 It is a pleasure to dedicate this paper
to David, thank him for his many wonderful contributions to 
Lie theory and to wish him a happy birthday!

\section{Motivation from modular representation theory} \label{sec:motivation}

In this section we sketch the route which led us to consider the
singularity in $\S \ref{sec:GL12}$. We have tried to
provide enough details and references that a motivated reader could adapt these
techniques to find other interesting (counter)examples. Most of the
ideas are already contained in \cite{Wil}, which has more detail than
the discussion below.

\subsection{The $p$-canonical basis} \label{sec:pcan}

Let $G, B, T$ be as in the introduction. Let $W$ denote the Weyl
group, $S$ its simple reflections, $\le$ its Bruhat order and $\ell$ its length function.  Consider the flag variety
$G/B$ with its stratification by $B$-orbits (the Schubert stratification):
\[
G/B = \bigsqcup_{w \in W} C_w.
\]
Fix a field $\Bbbk$ of characteristic $p \ge 0$ and let $D^b_{(B)}(G/B; \Bbbk)$ denote the
bounded derived category of constructible sheaves on $G/B$ which are
constructible with respect to the Schubert stratification. For $w \in
W$ denote by $\ic(w;\Bbbk)$ the intersection cohomology sheaf
and $\ES(w;\Bbbk)$ the parity sheaf (for the constant
  pariversity) \cite{JMW2,Wil} corresponding to
$\overline{C_w}$. We will drop the $\Bbbk$ from the notation if it
is clear from the context.
If $\Bbbk$ is of characteristic 0 then $\ES(w; \Bbbk) = \ic(w; \Bbbk)$.

Let $\mathcal{H}$ denote the Hecke algebra of $(W,S)$. It is a
free $\ZM[v^{\pm 1}]$-module with basis $\{ H_w \; | \; w \in W \}$
and multiplication determined by
\[
H_sH_w = \begin{cases} H_{sw} & \text{if $\ell(sw) > \ell(w)$,} \\
(v^{-1} - v)H_w + H_{sw} & \text{if $\ell(sw) < \ell(w)$.} \end{cases}
\]
Let $\{ \un{H}_w \}$ denote the Kazhdan-Lusztig or ``canonical'' basis
of $\HC$.  We use the normalizations of \cite{SoeKL}. For example $\un{H}_s =
H_s + v H_{id}$.

Given a finite
dimensional $\ZM$-graded vector space $V = \bigoplus V^i$ let
\[
\mathrm{ch} V = \sum \dim_{i \in \ZM}
V^{-i}v^i \in \ZM[v^{\pm 1}]
\]
denote its Poincar\'e polynomial. Given $\FS \in D^b_{(B)}(G/B;\Bbbk)$
define
\[
\ch \FS = \sum_{x \in W} \ch H^*(\FS_x) v^{-\ell(x)} H_x
\in \HC
\]
where $\FS_x$ denotes the stalk of $\FS$ at the point $xB/B \in C_x
\subset G/B$. It is a classical theorem of Kazhdan and Lusztig \cite{KL2}
(see also \cite{SpIH}) that if $\Bbbk$ is of characteristic zero then
\begin{equation} \label{eq:KL}
\ch \ic(w;\Bbbk) = \un{H}_w.
\end{equation}
For any $w \in W$ we define
\[
{}^p\underline{H}_w := \ch \ES(w;\Bbbk).
\]
(One can show that ${}^p\underline{H}_w$ only depends on the
characteristic $p$ of $\Bbbk$, which explains the notation.) We call the $\{ {}^p\underline{H}_w \}$ the
\emph{$p$-canonical basis} for reasons which the following proposition
should make clear:

\begin{prop} ~ \label{prop:pcan}
  \begin{enumerate}
  \item[i)] ${}^p\underline{H}_w = H_w + \sum_{x < w} {}^ph_{x,w} H_x$ with
  ${}^ph_{x,w}  \in \ZM_{\ge 0}[v^{\pm 1}]$ (hence $\{ {}^p\underline{H}_w \; | \; w \in
  W\}$ is a basis),
\item[ii)] ${}^p\underline{H}_w = \sum {}^pm_{x,w} \underline{H}_x$
  for self-dual ${}^pm_{x,w} \in
  \ZM_{\ge 0}[v^{\pm 1}]$,
\item[iii)] if ${}^p m_{x,w}$ are as in (ii) then ${}^pm_{x,w} = 0$
  unless $\LC(x) \supset \LC(w)$ and $\RC(x) \supset \RC(w)$
  where $\LC$ and $\RC$ denote left and right descent sets,
\item[iv)] ${}^p\underline{H}_x{}^p\underline{H}_y = \sum
  {}^p\mu_{xy}^z {}^p\underline{H}_z$ for self-dual ${}^p\mu_{x,y}^z \in
  \ZM_{\ge 0}[v^{\pm 1}]$,
\item[v)] for $p \gg 0$, ${}^p\un{H}_w = {}^0 \un{H}_w = \un{H}_w$.
  \end{enumerate}
\end{prop}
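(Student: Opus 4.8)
The plan is to establish the five assertions essentially by unwinding the definition ${}^p\underline{H}_w = \ch \ES(w;\Bbbk)$ and importing the known structural properties of parity sheaves on the flag variety from \cite{JMW2, Wil}. For (i), I would use that $\ES(w;\Bbbk)$ is supported on $\overline{C_w}$, is an extension of the constant sheaf on $C_w$ (normalised so that its restriction to $C_w$ is $\Bbbk_{C_w}[\ell(w)]$), and has stalks concentrated in degrees of a single parity; the coefficient of $H_w$ is therefore $1$, the sum runs over $x < w$, and positivity of ${}^ph_{x,w}$ is immediate since stalk cohomology spaces have non-negative dimension. Self-duality (Verdier self-duality of $\ES(w;\Bbbk)$ up to shift) gives the bar-invariance statements wherever they are asserted. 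The fact that ${}^p\underline H_w$ depends only on $p$ follows from the base-change properties of parity sheaves, as already remarked in the text preceding the proposition.

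For (ii), I would decompose $\ES(w;\Bbbk)$ after extension of scalars (or reduction) and compare with the characteristic-zero picture: since $\ch$ is additive in distinguished triangles and $\ic(x;\Bbbk_0) \mapsto \underline H_x$ in characteristic zero by \eqref{eq:KL}, the class $\ch \ES(w;\Bbbk)$ is a $\ZM_{\ge 0}[v^{\pm1}]$-combination of the $\underline H_x$; concretely ${}^pm_{x,w}$ records the graded multiplicity of $\ic(x;\Bbbk)$ as a subquotient of $\ES(w;\Bbbk)$, which is manifestly non-negative, and self-duality is inherited from self-duality of the two bases involved. Assertion (iii) is the support/costalk constraint: $\ES(w;\Bbbk)$ is a summand of a Bott--Samelson (or rather parity) complex built from $\underline H_s$ for $s$ in a reduced word, so its support conditions force the left and right descent-set containments $\LC(x)\supset\LC(w)$, $\RC(x)\supset\RC(w)$ on any $x$ with ${}^pm_{x,w}\neq 0$; equivalently, smoothness of $\overline{C_w}$ in the relevant directions makes the corresponding intersection cohomology sheaves not appear. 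Assertion (iv) is the statement that parity sheaves are closed under convolution up to shifts and direct sums (a key theorem of \cite{JMW2}): the convolution $\ES(x;\Bbbk) \star \ES(y;\Bbbk)$ is again a parity complex, hence a direct sum of shifts of various $\ES(z;\Bbbk)$, and applying $\ch$ (which intertwines convolution with Hecke multiplication) yields the stated expansion with self-dual non-negative structure constants ${}^p\mu_{x,y}^z$.

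For (v), the content is that for $p$ large the parity sheaf coincides with the IC sheaf. I would argue that there are only finitely many Schubert varieties in $G/B$, each with finitely many strata, and the stalks and costalks of $\ic(w;\ZM)$ are finitely generated; hence for $p$ larger than all the torsion primes appearing in these (finitely many) cohomology groups, reduction mod $p$ of $\ic(w;\ZM)$ remains perverse and indecomposable, so $\ES(w;\Bbbk) = \ic(w;\Bbbk)$ and, taking $\ch$, ${}^p\underline H_w = \underline H_w$. The main obstacle — or rather the only place where something more than formal bookkeeping is needed — is the convolution-stability input for (iv): it relies on the theorem of Juteau--Mautner--Williamson that the class of parity complexes is stable under pushforward along proper even maps, applied to the multiplication map $G/B \times_{G/P_s} G/B \to G/B$. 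Everything else reduces to the definition of $\ch$, additivity in triangles, Verdier duality, and the finiteness observations above; I would present (i)–(iii) and (v) briefly and spend the bulk of the proof citing and applying the convolution theorem for (iv).
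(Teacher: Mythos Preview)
Your proposal for (i), (iv), and (v) matches the paper's argument. For (ii), however, your mechanism is not quite right. The paper lifts $\ES(w;\FM_p)$ to a parity sheaf $\ES(w;\ZM_p)$ with $\ZM_p$-coefficients, then tensors with $\QM_p$ to obtain a parity sheaf over $\QM_p$, which (being in characteristic zero) decomposes as a direct sum of shifted intersection cohomology complexes $\ic(x;\QM_p)$. The three objects $\ES(w;\FM_p)$, $\ES(w;\ZM_p)$, $\ES(w;\ZM_p)\otimes\QM_p$ have the same character, and $\ch\ic(x;\QM_p)=\underline{H}_x$ by \eqref{eq:KL}; so ${}^pm_{x,w}$ is the graded multiplicity of $\ic(x;\QM_p)$ as a \emph{direct summand} of $\ES(w;\ZM_p)\otimes\QM_p$. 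Your claim that ${}^pm_{x,w}$ records ``the graded multiplicity of $\ic(x;\Bbbk)$ as a subquotient of $\ES(w;\Bbbk)$'' with $\Bbbk$ of characteristic $p$ does not work: over $\FM_p$ the characters of $\ic(x;\FM_p)$ need not equal $\underline{H}_x$, and there is no direct extension of scalars from $\FM_p$ to characteristic zero. The $\ZM_p$-lift is the essential step you are missing.

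For (iii) the paper's argument is also different from yours. It does not go through Bott--Samelson complexes; instead, if $P\supset B$ is the parabolic subgroup determined by $\RC(w)$, then $\ES(w;\Bbbk)$ is pulled back from $G/P$ (cf.\ \cite[Proposition~4.10]{JMW2}), hence so is every $\ic(x;\QM_p)$ appearing in the decomposition of (ii), forcing $\RC(x)\supset\RC(w)$. The left descent statement is then obtained from the symmetry ${}^pm_{x,w}={}^pm_{x^{-1},w^{-1}}$. Your Bott--Samelson suggestion is underspecified: a reduced expression for $w$ generally involves simple reflections not in $\RC(w)$, so being a summand of the corresponding Bott--Samelson complex does not by itself give the descent-set containment without an additional argument (which, once supplied, essentially reduces to the pullback-from-$G/P$ statement anyway).
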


\begin{proof}[Sketch of proof]
By definition the parity sheaf $\ES(w)$ is supported on
$\overline{C_w}$ and its restriction to  $C_w$ is isomorphic to a
shifted constant sheaf. (i) now follows easily from the definition of
$\ch$.

Each $\ES(w;\FM_p)$ admits a lift 
$\ES(w;\ZM_p)$, a parity sheaf with coefficients in
$\ZM_p$. Then $\ES(w, \ZM_p) \otimes_{\ZM_p} \QM_p$ is a parity sheaf
with coefficients in $\QM_p$, and is hence isomorphic to a direct sum
of intersection cohomology complexes. (ii) now follows from
\eqref{eq:KL} and the fact that $\ES(w;\FM_p)$, $\ES(w,\ZM_p)$ and $\ES(w,
  \ZM_p) \otimes_{\ZM_p} \QM_p$ all have the same character (see
  \cite[Theorem 3.10]{Wil}).

For fixed $w$ the parity sheaf $\ES(w; \FM_p)$ may be obtained via
pull-back from the partial flag variety $G/P$ where $P \supset B$ is the
parabolic subgroup determined by $\RC(w) \subset S$
(see \cite[Proposition 4.10]{JMW2}). Hence $\RC(x) \supset \RC(w)$ as
claimed. The statement for left descent sets follows because
${}^pm_{x,w} = {}^pm_{x^{-1},w^{-1}}$ by  \cite[\S 3 eq. (4)]{Wil}.

Each parity sheaf admits a lift to the $B$-equivariant derived category $D^b_B(G/B,\Bbbk)$ where there is a
convolution formalism categorifying the multiplication in the Hecke
algebra. (iv) then follows because the convolution of two
parity sheaves is isomorphic to a direct sum of shifts of parity
sheaves \cite[Theorem 4.8]{JMW2}.

Finally (v) follows from \eqref{eq:KL}  and \cite[Proposition 2.41]{JMW2} 
which asserts that $\ES(w;\FM_p) = \ic(w;\FM_p)$ for all but finitely many primes $p$.
\end{proof}

\begin{warning} The $p$-canonical basis depends on the root system of
  $G$, not just on its Weyl group. (For example the 2-canonical basis
  differs in types $B_3$ and $C_3$.) Hence one should think about the
  $p$-canonical basis as a basis of the Hecke algebra attached to a 
  root system or Cartan matrix rather than a Coxeter system. Kashiwara and Saito have
  observed the same phenomenon for characteristic cycles \cite[Example 5.4]
  {KT}.
\end{warning}

\subsection{The $p$-canonical basis and decomposition numbers}

We briefly recall the notion of a decomposition
number for perverse sheaves. An excellent reference is
\cite{decperv}.

Let $X$ denote a complex variety, $Z \subset X$ a locally closed
smooth subvariety, and $\LS$ a local system of free $\ZM$-modules on
$X$. One may consider the intersection cohomology
extension\footnote{For the perversity $p$, see \cite{decperv}.}
$\ic(\overline{Z};\LS)$. It is a perverse sheaf with $\ZM$-coefficients
on $X$. One has
\[
\ic(\overline{Z}; \LS)  \otimes_\ZM \QM = \ic(\overline{Z}; \LS \otimes_\ZM \QM)
\]
and so $\ic(\overline{Z}; \LS)$ can be thought of as a $\ZM$-form
of $\ic(\overline{Z}; \LS \otimes \QM)$. In general,
\[
\ic(\overline{Z}; \LC)  \otimes^L_\ZM \FM_p
\in D^b_c(X; \FM_p)
\]
is perverse but no longer simple. The decomposition matrix 
encodes the Jordan-H\"older multiplicities of the simple perverse
sheaves occurring in $\ic(\overline{Z}; \LC)  \otimes^L_\ZM
\FM_p$.

In this paper we will be concerned with the flag variety together with
its Schubert stratification, as in \S\ref{sec:pcan}. In this case all
the strata are simply connected and the decomposition matrix takes the
form $(d_{y,x})_{y,x \in W}$ where
\[
d_{y,x} := [ \ic(\overline{C_y};\ZM) \otimes_{\ZM} \FM_p:
\ic(\overline{C_x};\FM_p)].
\]

The relation between the characters of the parity sheaves (i.e. the
$p$-canonical basis) and the decomposition matrix is 
subtle. For example, recent papers of Achar and Riche \cite{AR1,AR2} prove that knowledge
of the $p$-canonical basis gives (a $q$-refinement of) the
decomposition matrix for perverse sheaves on the
Langlands dual flag variety.

Here we will be concerned with a much more limited but simpler
relationship. Roughly it says that the first time the $p$-canonical
basis differs from the canonical basis corresponds to the first
non-trivial decomposition number (see Proposition \ref{prop:pcan} and above for notation):

\begin{prop}
  Fix $y \in W$ and suppose that $x < y$ is maximal in the Bruhat
  order such that ${}^p
  m_{x,y} \ne 0$. If ${}^p m_{x,y} \in \ZM$ then $d_{y,x} = {}^p m_{x,y}$.
\end{prop}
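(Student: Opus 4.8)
The plan is to exploit the fact that, by maximality of $x$, the $p$-canonical basis element ${}^p\underline{H}_y$ differs from $\underline{H}_y$ only in degrees $\le x$ of the Bruhat order, and to compare two natural $\FM_p$-perverse sheaves on $\overline{C_y}$: the modular reduction $\ic(\overline{C_y};\ZM)\otimes^L_\ZM \FM_p$ and the parity sheaf $\ES(y;\FM_p)$. First I would observe that in a neighbourhood of the stratum $C_x$ these two complexes agree, so all decomposition numbers $d_{y,z}$ with $z$ incomparable to or above $x$ (other than $z=y$) vanish, and $d_{y,x}$ is controlled entirely by the difference between $\ch\ic(\overline{C_y};\ZM)\otimes^L\FM_p$ and $\ch\ES(y;\FM_p)$ in degree $x$.

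Next I would compute characters. Writing out the defining equations, $\ch\ES(y;\FM_p) = {}^p\underline{H}_y = \underline{H}_y + {}^pm_{x,y}\underline{H}_x + (\text{lower terms})$ by Proposition~\ref{prop:pcan}(ii) together with the maximality of $x$; here I use the hypothesis ${}^pm_{x,y}\in\ZM$ so that this coefficient is a genuine integer, not merely a polynomial in $v$. On the other hand $\ic(\overline{C_y};\ZM)\otimes^L_\ZM\FM_p$ is perverse with the same stalk Euler characteristics as $\ic(\overline{C_y};\QM)$, hence the same character $\underline{H}_y$; decomposing it into simple $\FM_p$-perverse sheaves gives $\ch\bigl(\ic(\overline{C_y};\ZM)\otimes^L\FM_p\bigr) = \underline{H}_y = \sum_z d_{y,z}\,\ch\ic(\overline{C_z};\FM_p)$. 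The key technical input is then that $\ic(\overline{C_x};\FM_p) = \ES(x;\FM_p)$: this holds because $x$ is maximal with ${}^pm_{x,y}\ne 0$, so below $x$ there is no discrepancy between parity sheaves and IC sheaves in the relevant range, and thus ${}^p\underline{H}_x = \underline{H}_x$, i.e. $\ES(x;\FM_p)$ is itself simple and equals $\ic(\overline{C_x};\FM_p)$.

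Combining these, the coefficient of $H_x$ in $\ch\bigl(\ic(\overline{C_y};\ZM)\otimes^L\FM_p\bigr)$ extracted from the IC-decomposition is $d_{y,x}\cdot(\text{coeff of }H_x\text{ in }\ch\ic(\overline{C_x};\FM_p)) = d_{y,x}$, since $\ic(\overline{C_x};\FM_p)$ restricted to its open stratum is a shifted constant sheaf, contributing $H_x$ with coefficient $1$; all other simple constituents $\ic(\overline{C_z};\FM_p)$ with $z$ comparable to $x$ contribute nothing to $H_x$ because either $z<x$ (so $z\ne x$ and $\ic$ is supported away from $C_x$ in degree-$x$ terms... actually they contribute to strictly lower strata) or $z>x$ with $z\ne y$ does not occur. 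Meanwhile, the same coefficient computed via parity sheaves is ${}^pm_{x,y}$ by the character formula above. Equating gives $d_{y,x} = {}^pm_{x,y}$.

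The main obstacle is the bookkeeping needed to ensure that \emph{no other} simple perverse sheaf $\ic(\overline{C_z};\FM_p)$ contaminates the coefficient of $H_x$ — one must carefully use that $x$ is Bruhat-maximal among $\{z : {}^pm_{z,y}\ne 0, z\ne y\}$ to rule out both the appearance of $\ic(\overline{C_z};\FM_p)$ for $z$ incomparable to or above $x$ in the decomposition (forcing $d_{y,z}=0$) and the possibility that some $\ic(\overline{C_z};\FM_p)$ with $z < x$ itself fails to be simple in a way that feeds back into degree $x$. Both follow from the observation in \cite{VW} that the first discrepancy between the $p$-canonical and canonical bases is "detected" by the first non-simple modular reduction, but making this precise — i.e. an induction on the Bruhat order showing $\ic(\overline{C_z};\FM_p) = \ES(z;\FM_p)$ for all $z \not\ge x$ except $z$ not above... — is the heart of the argument; once that is in place the character comparison is a one-line extraction of a coefficient.
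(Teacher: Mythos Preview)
Your overall strategy --- localize to the open set $X = \bigsqcup_{z \ge x} C_z$, use that on $U = X \setminus C_x$ the parity sheaf, the integral IC sheaf, and the modular IC sheaf all coincide, and then compare Euler characteristics at $C_x$ --- is the same as the paper's. But the coefficient extraction is not correctly set up, and the step you flag as ``the key technical input'' is a red herring.

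First, the red herring. You claim that one needs $\ic(\overline{C_x};\FM_p) = \ES(x;\FM_p)$, and justify this by maximality of $x$. But the hypothesis constrains ${}^pm_{\cdot,y}$, not ${}^pm_{\cdot,x}$; there is no reason the parity sheaf at $x$ should be simple. Fortunately this is also unnecessary: on $X$ the stratum $C_x$ is closed, so $\ic(\overline{C_x};\FM_p)|_X$ is just the shifted constant sheaf on $C_x$, and there is nothing to prove.

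The genuine gap is in the line ``the coefficient of $H_x$ \dots\ extracted from the IC-decomposition is $d_{y,x}$''. You are forgetting that the $z=y$ term $\ic(\overline{C_y};\FM_p)$ itself contributes to the coefficient of $H_x$ (its stalk at $C_x$ is nonzero), and this contribution is \emph{not} the same as over $\QM_p$. Writing $c$ for the stalk Euler characteristic of $\ic(\overline{C_y};\FM_p)$ at a point of $C_x$, your equation actually reads
\[
\chi(\ic_{\QM_p}) \;=\; c \;+\; (-1)^{-\ell(x)} d_{y,x},
\]
so you need $\chi(\ic_{\QM_p}) - c = (-1)^{-\ell(x)}\,{}^pm_{x,y}$. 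This is precisely the content of the paper's proof: one must compute the stalk of the \emph{modular} IC sheaf at $C_x$, and this is done by analysing the extension of $\ic_{\ZM_p}|_U$ across $C_x$ via the triangle $A \to i^*j_*(\ic_{\ZM_p}|_U) \to B \to$, showing that the attaching map $\alpha$ in the relevant degree is an isomorphism over $\QM_p$ but zero over $\FM_p$, and then invoking the Deligne construction $i^*\ic = \tau_{<-\ell(x)} i^*j_*(\ic|_U)$ to read off the discrepancy. Your proposal contains no mechanism for this computation; the appeal to ``the observation in \cite{VW}'' at the end is exactly where this analysis must go, and it is not a one-line extraction of a coefficient.
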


\begin{proof}
  Fix $x$ and $y$ are in the proposition. Set
  \begin{gather*}
    X = \bigsqcup_{z \ge x} BzB/B, \qquad 
Z = BxB \subset X, \qquad
U = X \setminus U
  \end{gather*}
and denote by $i$ (resp. $j$) the closed (resp. open) embedding of $Z$
(resp. $U$) into $X$. Note that $X$ is open in $G/B$.

For $\Bbbk \in \{ \FM_p, \ZM_p, \QM_p \}$ let
$\ic_\Bbbk$ (resp. $\ES_{\Bbbk}$) denote the intersection cohomology
(resp. parity) sheaf corresponding to the stratum $ByB/B \subset
X$. We have $\ES_{\QM_p} \cong \ic_{\QM_p}$ and our assumptions
guarantee that $\ES$ is perverse with
\[
\ic_{\Bbbk|U} \cong \ES_{\Bbbk|U}.
\]
Hence we need to examine the difference between $\ic_{\FM_p}$ and
$\ES_{\FM_p}$ over the closed stratum $Z$.

Our main tool will be \cite[Lemma 2.18]{JMW2} which gives a
bijection between isomorphism classes of extensions of a fixed 
$\FS$ on $U$ to $X$, and
isomorphism classes of distinguished triangles on $Z$ of the form
\begin{equation}
  \label{eq:ext}
  A \to i^*j_*\FS \to B \triright.
\end{equation}
If $\FS'$ is such an extension then $A$ and $B$ are given by
\begin{equation}
  \label{eq:stalkscostalks}
  i^*\FS' \cong A \quad \text{and} \quad i^!\FS' \cong B[-1].
\end{equation}

Let us examine the triangle corresponding to the
extension $\ES_{\ZM_p}$ of $\ic_{\ZM_p|U}$. It has the form
\begin{equation}
  \label{eq:parityAB}
A \to i^*j_* (\ic_{\ZM_p|U}) \to B \triright  
\end{equation}
Because $Z$ is contractible we can view \eqref{eq:parityAB} as a distinguished
triangle of $\ZM_p$-modules. By \eqref{eq:stalkscostalks}  and the fact that $\ES$ is a parity
sheaf we deduce:
\begin{enumerate}
\item[(1)] $H^m(A)$ and $H^m(B)$ are free $\ZM_p$-modules;
\item[(2)] $H^m(A) = 0$ if $m - \ell(y)$ is odd, and $H^m(B) = 0$ if $m -
  \ell(y)$ is even.
\end{enumerate}
The assumptions of the proposition and  \eqref{eq:stalkscostalks} guarantee that
\begin{enumerate}
\item[(3)] $H^m(A)$ vanishes for $m > -\ell(x)$ and $H^m(B)$ vanishes for $m<-\ell(x)-1$;
\item[(4)] $H^{-\ell(x)}(A)$ is free of rank ${}^pm_{x,y}$ (in
  particular $\ell(y) - \ell(x)$ is even).
\end{enumerate}
Because $\ZM_p$ is hereditary, each of the terms in
\eqref{eq:parityAB} is isomorphic to its cohomology. Hence we can
turn the triangle and rewrite it as
\[
H^*(B)[-1] \to  H^*(A) \to H^*( i^*j_*
\FS ) \triright.
\]
By (3) above the only non-zero map component of the first map is
\[
\alpha : H^{-\ell(x)-1}(B) \to H^{-\ell(x)}(A).
\]
Because $\ES$ is indecomposable, $\alpha$ does not
map any summand of $H^{-\ell(x)-1}(B)$ isomorphically onto a summand
of $H^{-\ell(x)}(A)$ by \cite[Lemma 2.21]{JMW2}. In other words,
$\alpha \otimes_{\ZM_p} \FM_p = 0$. On the other hand,
we have
\[
\ES \otimes_{\ZM_p} \QM_p \cong \ic_{\QM_p} \oplus \ic(Z)^{\oplus ({}^pm_{x,y})}
\]
and hence $\a$ is an isomorphism over $\QM_p$. In other words, $\ker
\a = 0$ and the domain and codomain of $\a$ are free of the same rank.

By the long exact sequence of cohomology we deduce that:
\begin{gather*}
H^m(i^*j_* \ic_{\FM_p|U}) = \begin{cases}
H^m(A) \otimes \FM_p & \text{if $m < \ell(x) - 1$,} \\
H^{-\ell(x)}(A) \otimes \FM_p & \text{if $m = -\ell(x) - 1$ or
  $m=-\ell(x)$},\\
H^{m}(B) \otimes \FM_p & \text{if $m > -\ell(x)$}.
\end{cases} \\
H^m(i^*j_* \ic_{\QM_p|U}) = \begin{cases}
H^m(A) \otimes \FM_p & \text{if $m < \ell(x) - 1$,} \\
0 & \text{if $m = -\ell(x) - 1$ or
  $m=-\ell(x)$},\\
H^{m}(B) \otimes \FM_p & \text{if $m > -\ell(x)$}.
\end{cases}
\end{gather*}
By the Deligne construction \cite[Proposition 2.1.11]{BBD} we have
\[
i^* \ic_{\Bbbk} = i^*\tau_{< -\ell(x)} j_* \ic_{\Bbbk} = \tau_{<
  -\ell(x)}  i^*  j_* \ic_{\Bbbk}.
\]
where $\tau_{< m}$ denotes truncation.
Hence if $\chi$ denotes the Euler characteristic at any point in $Z$
we have
\[
\chi(\ic_{\FM_p})= \chi(\ic_{\QM_p}) - (-1)^{-\ell(x) } ({}^p m_{x,y}).
\]
Now we are done: if we write
\[
[\ic_{\ZM_p} \otimes^L_{\ZM_p} \FM_p] =  [\ic_{\FM_p}] + a [\ic(Z)]
\]
in the Grothendieck group of $\FM_p$-perverse sheaves on $X$ 
then taking Euler characteristics over $Z$ yields $a = {}^p m_{x,y}$ as claimed.
\end{proof}

As in the introduction we write $m_{x,y}$ for the the characteristic cycle
multiplicities.
The following is an immediate consequence of the previous proposition,
and \cite[Theorem 2.1]{VW}.

\begin{cor} \label{mpm}
  Suppose that $x < y$ are as in the previous proposition. Then
\[
m_{x,y} \ge {}^p m_{x,y}.
\]
\end{cor}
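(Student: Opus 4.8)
The plan is to combine the previous proposition with the main inequality of \cite{VW}. Recall that \cite[Theorem 2.1]{VW} relates characteristic cycle multiplicities to decomposition numbers for perverse sheaves on the flag variety; concretely it asserts an inequality of the form $m_{x,y} \ge d_{y,x}$, where $d_{y,x}$ is the decomposition number introduced in \S\ref{sec:motivation}. (The inequality in \cite{VW} is proved by microlocalization: the characteristic cycle of $\ic(\overline{C_y};\QM)$ sees the multiplicity of $\ic(\overline{C_x};\FM_p)$ in $\ic(\overline{C_y};\ZM)\otimes^L \FM_p$ via the behaviour of nearby and vanishing cycles, or equivalently via Sabbah's specialization.) So the only remaining input is to identify $d_{y,x}$ with ${}^pm_{x,y}$ under our maximality hypothesis.

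First I would invoke the previous proposition. Its hypothesis is exactly that of the corollary: $x < y$ with $x$ maximal in the Bruhat order such that ${}^pm_{x,y} \ne 0$, and ${}^pm_{x,y} \in \ZM$ (i.e. the polynomial ${}^pm_{x,y}$ is a constant, which by self-duality and positivity from Proposition \ref{prop:pcan}(ii) is a non-negative integer). Under these hypotheses the proposition gives $d_{y,x} = {}^pm_{x,y}$. Second, I would feed this equality into the \cite{VW} inequality $m_{x,y} \ge d_{y,x}$ to conclude $m_{x,y} \ge {}^pm_{x,y}$, which is the assertion. There is essentially no calculation: the corollary is a formal concatenation of two previously established facts.

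The only point requiring a word of care is that the hypotheses genuinely match up and that ${}^pm_{x,y}$, a priori an element of $\ZM_{\ge 0}[v^{\pm 1}]$, really is a non-negative integer here — but this is built into the statement ``${}^pm_{x,y} \in \ZM$'' inherited from the previous proposition, so nothing new is needed. I would therefore write the proof as a single sentence: by the previous proposition ${}^pm_{x,y} = d_{y,x}$, and by \cite[Theorem 2.1]{VW} one has $m_{x,y} \ge d_{y,x}$, whence the claim. The main (and only) obstacle is external: one must trust the microlocal inequality of \cite{VW}, whose proof lies outside this excerpt; modulo that, the corollary is immediate.
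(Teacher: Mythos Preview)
Your proposal is correct and matches the paper's own argument: the corollary is stated there as ``an immediate consequence of the previous proposition and \cite[Theorem 2.1]{VW}'', i.e.\ exactly the concatenation $m_{x,y} \ge d_{y,x} = {}^p m_{x,y}$ that you describe.
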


\subsection{Searching for a counter-example} Consider the following
variant of Question \ref{q} (with notation as in Proposition \ref{prop:pcan}):

\begin{question} \label{q2} Suppose that $G= SL_n(\CM)$ and let $p$
  be a prime. Is ${}^pm_{x,y} = 0$ if $x \ne y$
  and $x$ and $y$ lie in the same two-sided cell? \end{question}

It will become clear below that a positive answer to Question \ref{q} implies a
positive answer to Question \ref{q2}. Question \ref{q2} is also important for
modular representation theory, with connections to Lusztig's conjecture around the
the Steinberg weight \cite{Soe}, amongst other things.

One can show (using Soergel calculus \cite{EW} or 
Schubert calculus \cite{HW}) 
that the counter-example in \S\ref{sec:calculation} also
gives a counter-example to Question \ref{q2}. We found the examples by
pursuing a naive idea, which is the main theme of \cite{Wil}: the
$p$-canonical basis has remarkable positivity properties (summarized in
Proposition \ref{prop:pcan}) and these positivity properties are enough to
rule out many potential counter-examples.

Asume that $W$ is an arbitrary Weyl group.
For any left cell $C \subset W$ we can consider the corresponding cell
module
\[
M_C = \bigoplus_{x \in C} \ZM[v^{\pm 1}] M_x := \bigoplus_{x \le_L C}
\ZM[v^{\pm 1}] \un{H}_x / (\bigoplus_{x <_L C}\ZM[v^{\pm 1}]\un{H}_x ).
\]
The $\HC$-module structure in the basis $\{ M_x \}$ is encoded in the $W$-graph of $C$. Fix a
prime $p$ and assume that the $p$-canonical basis satisfies:
\begin{equation}
\label{cellcond}
\text{for all $y \in C$ if }{}^p m_{x,y} \ne 0 \text{ then } x \le_{L} y.
\end{equation}
Then we may define ${}^pM_y$ as the image of ${}^p \un{H}_y$ in $M_C$
and
obtain in this way a $p$-canonical basis for the cell module $M_C$. By
Proposition \ref{prop:pcan} it satisfies the following properties:
\begin{enumerate}
\item (positive upper-triangularity) we have 
\[
{}^pM_y = M_y + \sum_{C \ni x <
    y} {}^pm_{x,y} M_x \text{ with ${}^p m_{x,y} \in \ZM_{\ge
      0}[v^{\pm 1}]$ self-dual;} \]
\item (positive structure constants) for any $x \in W$,\[{}^p\un{H}_x \cdot {}^p M_y \in
  \bigoplus_{z \in C} \ZM_{\ge 0}[v^{\pm 1}] ({}^pM_z).\]
\end{enumerate}

\begin{Eg}
  Suppose that $W$ is of type $B_2$ with simple reflections $s,
  t$. Consider the left cell $C = \{ s, ts, sts \}$. The $W$-graph is:
\[
\xymatrix{
\{ s \} \ar@{-}[r] & \{t \} \ar@{-}[r] & \{ s \}
}
\]
In this case there are two possible bases for $M_C$ satisfying
(1) and (2). The first is the Kazhdan-Lusztig basis $\{ M_x \}$. The second is the basis $\{ M_x' \}$ with $M_x' = M_x$ for $x
  \in \{ s, ts \}$ and $M_{sts}' := M_{sts} + M_s$. In this case $M'$
  agrees with the image of the 2-canonical basis for $B_2$ (for an
  appropriate choice of long and short root).
\end{Eg}

Now assume that $W$ is of type $A_{n-1}$. In this case two sided cells are
parametrized by partitions $\lambda$ of $n$. Also, all left cells in
fixed two sided are irreducible and afford isomorphic (based)
representations of the Hecke algebra $\mathcal{H}$.

\begin{lem}
  Let $\l$ be a partition of $n$ and $E_\l \subset W$ the
  corresponding two-sided cell. Then there exists a left cell $C
  \subset E_\l$ satisfying \eqref{cellcond}.
\end{lem}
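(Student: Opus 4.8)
The plan is to take $C$ to be the left Kazhdan--Lusztig cell of the longest element $w_{\l'}$ of the Young (standard parabolic) subgroup $W_{\l'} \subseteq W$ attached to the conjugate partition $\l'$. First I would check that $w_{\l'}$ really does lie in $E_\l$: this is the classical fact that the longest element of a standard parabolic subgroup $S_{\mu_1} \times \dots \times S_{\mu_k}$, with $\mu$ a partition, has Robinson--Schensted shape equal to the conjugate partition $\mu'$; we apply it with $\mu = \l'$. (This is the one place where the hypothesis that $W$ is of type $A$ is used: in general a two-sided cell need not contain the longest element of any parabolic subgroup.) Thus we may set $C := \{ x \in W \mid x \sim_L w_{\l'} \}$; then $C \subseteq E_\l$, and it remains to verify \eqref{cellcond} for this $C$.

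Write $J \subseteq S$ for the set of simple reflections with $W_J = W_{\l'}$, so that $\RC(w_{\l'}) = J$. I would then invoke two standard facts about Kazhdan--Lusztig cells: that $x \le_L y$ implies $\RC(x) \supseteq \RC(y)$ --- so in particular $\RC$ is constant along a left cell, giving $\RC(y) = J$ for every $y \in C$ --- and that $\RC(x) \supseteq J$ holds exactly when $x$ is the longest element of its coset $x W_J$, i.e. $x = u\,w_{\l'}$ with $u$ the minimal-length representative of $x W_J$ and $\ell(x) = \ell(u) + \ell(w_{\l'})$.

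Now take $y \in C$ with ${}^p m_{x,y} \neq 0$. Proposition~\ref{prop:pcan}(iii) gives $\RC(x) \supseteq \RC(y) = J$, hence $x = u\,w_{\l'}$ as above with lengths adding. Length-additivity shows that $\un{H}_x = \un{H}_{u w_{\l'}}$ is the leading term of the product $\un{H}_u \un{H}_{w_{\l'}}$, so in particular $\un{H}_x$ occurs in $\un{H}_u \un{H}_{w_{\l'}}$; by the definition of the left preorder this means $x \le_L w_{\l'}$. As $w_{\l'} \in C$ we have $w_{\l'} \sim_L y$, and therefore $x \le_L w_{\l'} \sim_L y$, that is $x \le_L y$ --- which is exactly \eqref{cellcond}. (Note that the cell $C$ obtained this way does not depend on $p$, so the same left cell works for every prime at once.)

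The only genuinely delicate point is the initial choice of $C$, together with the realisation that the constant right descent set of the left cell through $w_{\l'}$ is precisely the parabolic set $J$; once that is in hand, Proposition~\ref{prop:pcan}(iii) does the rest, pinning each $x$ with ${}^p m_{x,y}\neq 0$ below $w_{\l'}$ in the left preorder. I do not anticipate trouble with the remaining ingredients: the Robinson--Schensted parametrisation of cells in type $A$, the constancy of $\RC$ along left cells, and the coset bookkeeping are all routine.
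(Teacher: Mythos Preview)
Your proof is correct and uses the same choice of left cell as the paper (the left cell of the longest element of the relevant standard parabolic; the paper writes $w_\lambda$ under the convention that $w_\lambda \in E_\lambda$, whereas you use the Robinson--Schensted convention and hence write $w_{\lambda'}$ --- a harmless labelling difference).

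Where you diverge from the paper is in the verification of \eqref{cellcond}. The paper first argues that ${}^p\un{H}_{w_\lambda} = \un{H}_{w_\lambda}$ (combining Proposition~\ref{prop:pcan}(i), the fact that $\{x : x \le w_\lambda\} = W_\lambda$, and (iii)) and then runs a ``simple induction'' through the left cell, implicitly using the positivity of structure constants in Proposition~\ref{prop:pcan}(iv) to propagate the support condition from $w_\lambda$ to every $y \in C$. Your argument avoids both the base case and the induction: you observe that $\RC$ is constant on a left cell (so $\RC(y) = J$ for every $y \in C$), apply Proposition~\ref{prop:pcan}(iii) directly to get $\RC(x) \supseteq J$, and then conclude $x \le_L w_J$ from the factorisation $x = u\,w_J$ with lengths adding and the fact that $\un{H}_x$ is the top term of $\un{H}_u \un{H}_{w_J}$. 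This is cleaner --- it uses only (iii), not (i) or (iv), and makes transparent that the whole argument is really the implication ``$\RC(x) \supseteq J \Rightarrow x \le_L w_J$''. The paper's route, on the other hand, makes more explicit use of the categorified positivity and would generalise better to situations where the descent-set bookkeeping is less tidy.
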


\begin{proof}[Sketch of proof] Let $w_\l$ denote the longest element of the standard parabolic
  subgroup $W_\l \subset W$ determined by $\l$. Then $w_\l \in
  E_\l$. We claim that the left cell $C$ containing $w_\l$ satisfies
  \eqref{cellcond}. Firstly, ${}^p\un{H}_{w_\l} = \un{H}_{w_\l}$ by
  (i) and (iii) of Proposition \ref{prop:pcan} and a simple induction
  then shows that
\[
{}^p \un{H}_y = \sum_{x \le_{L} w_\l} {}^pm_{x,y} \un{H}_x.
\]
for all $y \in C$. Hence \eqref{cellcond} holds.
\end{proof}

It follows that any left cell representation in type $A$ admits a
$p$-canonical basis satisfying the above positive conditions. One can
apply computer searches in order to isolate potential
counter-examples and then use Soergel calculus \cite{EW} or 
Schubert calculus \cite{HW} to check whether one has indeed found a
counter-example.

In order to implement this approach one needs the $W$-graphs of the
left cell representations in type $A$. These are provided by the
wonderful code of Howlett and Nguyen \cite{HN} for magma \cite{magma}.

\begin{remark}~
  \begin{enumerate}
  \item Using the recent results of Achar-Riche \cite{AR1,AR2} one can
    show that if there is a counter-example for a left cell
    corresponding to $\lambda$ then there
    is also a counter-example for the left cell corresponding to the
    transposed partition $\lambda^t$. This allows one to
    roughly halve the number of left cells which one needs to
    consider. Experimentally, the above positivity properties are 
    more restrictive in left cells corresponding to partitions ``near the top'' of the dominance
    order. (For example for $S_4$ there is only one solution for
    the left cell corresponding to the partition $(3,1)$, whereas
    there are two for the partition
    $(2,1,1)$.)
  \item Lusztig has given a beautiful description of the $J$-ring for a fixed
    two-sided cell in $S_n$ as a (based) matrix ring. Using this result one can show that if
    the $p$-canonical basis is trivial (i.e. equal to the image of the
    Kazhdan-Lusztig basis) in a fixed left cell then
    Question \ref{q2} has a positive answer for that two-sided cell.
  \item The above methods yielded another counter-example to
    Question \ref{q2}, this time in $GL_{13}$:
    \begin{gather*}
      \label{eq:3}
x =  12132156543765438798765ba98c,\\
 y = 121321546543765438798765aba9876cba98
    \end{gather*}
Here we write $x$ and $y$ as words in the simple transpositions $1, \dots 9, a,
b, c$ of $S_{13}$. Yoshihisa Saito has informed me that in this case
one also obtains the Kashiwara-Saito singularity as a normal slice.
\end{enumerate}
\end{remark}

\section{Two realisations of the Kashiwara-Saito singularity} \label{sec:calculation}

\subsection{Notation} Fix a positive integer $ n \ge 1$.

Let $S_n$ denote the symmetric group, which we regard
as permutations of the set $\{1, \dots, n \}$. We view $S_n$ as a
Coxeter group with Coxeter generators the simple transpositions $s_i =
(i,i+1)$ for $1 \le i \le n-1$. We write $\ell$ for the length
function on $S_n$ and $\le $ for the Bruhat order.

We will usually write
permutations in ``string notation'' i.e. we write $x = x_1x_2 \dots
x_n$ to mean that $x$ is the permutation in $S_n$ which sends $1 \mapsto x_1$, $2 \mapsto
x_2$ etc. To avoid confusion when using string notation we extend our
alphabet of digits $1, \dots, 9$ by the letters $a, b \dots$ with
$a = 10$, $b = 11 \dots$.

Let $G = GL_n(\CM)$ denote the general linear group of invertible
complex matrices. Given $x = x_1x_2 \dots x_n \in S_n$ we will denote
by $\dot{x}$ the corresponding permutation matrix. That is $\dot{x}(e_i)
= e_{x_i}$ if $e_1, e_2, \dots, e_n$
denotes the standard basis of $\CM^n$.

Let $B \subset G$ denote the Borel subgroup of upper triangular
matrices. Let $G/B$ denote the flag variety. Given $y \in S_n$ we
denote by
\[
C_y = B\dot{y}B/B \subset G/B
\]
its Schubert
cell and by $Z_y$ the corresponding Schubert variety:
\[
Z_y := \overline{B\dot{y}B/B} = \overline{C_y}.
\]

\subsection{Equations for slices to Schubert cells} We recall how to explicitly
write down equations for slices to Schubert cells in Schubert
varieties. Everything here can be checked reasonably easily by hand with the
(possible) exception of the fact that the equations 
\eqref{rankconds} are complete.

Let $U_-, U \subset GL_n(\CM)$ the
subgroups of unipotent lower and upper triangular matrices respectively. The natural map
$U_- \to G/B$ is an open immersion, giving a coordinate patch
isomorphic to $\AM^{n \choose 2}$ around
the base point $B \in G/B$. Hence for any $x \in S_n$ the natural map
$\pi : \dot{x}U_- \to G/B$ gives a coordinate patch around $\dot{x}B \in
G/B$. For a permutation $x = x_1 \dots x_n$ we have:
\[
\dot{x}U_- = \{ g = (g_{i,j}) \in GL_n(\CM)\; | \; g_{x_i,i} = 1 \text{ and }
g_{x_i,j}  = 0 \text{ for $j > i$} \}.
\]
For $y \in S_n$ the inverse image
$\pi^{-1}(Z_y) \subset \dot{x}U_-$ is given by the equations (see
\cite{Fulton}, \cite[\S 3.2]{WY1} and \cite[\S 2.2]{WY2}):
\begin{equation} \label{rankconds}
\rank( (g_{i,j})_{a \le i \le n, 1 \le j \le b} ) \le \rank(
(\dot{y}_{i,j})_{a \le i \le n, 1 \le j \le b} ) \quad \text{for all}
\quad 1 \le a, b \le
n.
\end{equation}

We have
\[
\pi^{-1}(C_x)   := \{ g \in \dot{x} U_- \; | \; g_{i,j} = 0 \text{ for } i > x_j \}.
\]
Hence if we set
\[
N_x = \{ g \in \dot{x}U_- \; | \; g_{i,j} = 0 \text{ for } i < x_j \}
\]
then $N_x$ is a normal slice to $C_x$ in $\dot{x}U_-$. Hence the singularity
of $Z_y$ along $C_x$ is given by $N_x \cap \pi^{-1}(Z_y)$ which is
given by intersecting the linear equations describing $N_x$ with the
equations \eqref{rankconds}.

\begin{ex} Perhaps an example will help decipher the notation. Consider $n = 4$ and let $x =
  2143$ and $y = 4231$. We have
\[
N_x = \left \{ \left ( \begin{matrix} 0 & 1 & 0 & 0 \\ 1 & 0 & 0 & 0 \\ a &
    b & 0 & 1 \\ c & d & 1 & 0 \end{matrix} \right ) \; \middle | \; a, b,
c, d \in \CM \right \}
\]
and the rank conditions \eqref{rankconds} reduce in this case to the
single equation $ad-bc = 0$.
\end{ex}

\subsection{The Kashiwara-Saito singularity}
Let $M_2(\CM)$ denote the space of $2 \times 2$-complex matrices with
coefficients in $\CM$. Consider the space $S$ of matrices $M_i \in M_2(\CM)$ for $i \in \ZM/4\ZM$
satisfying the two conditions:
\begin{gather}
  \label{eq:1}
\det M_i = 0 \text{ for $i \in \ZM/4\ZM$,} \\
M_iM_{i+1} = 0 \text{ for $i \in \ZM/4\ZM$}.
\end{gather}
Clearly $S$ is an affine variety. One can show that it is
irreducible of dimension 8. We call $S$ (or more precisely the
singularity of $S$ at $0 := (0,0,0,0) \in S$)
the \emph{Kashiwara-Saito} singularity. In \cite{KS} it is shown that
the conormal bundle to $0$ is a component of the characteristic cycle
of the intersection cohomology $D$-module on $S$. In particular, the
characteristic cycle is reducible.

\subsection{Realisation in $GL_8$}

Now let $n = 8$ and consider the permutations
\[
u := 21654387, \quad v := 62845173.
\]
Then $u$ is the maximal element in the standard parabolic subgroup
$\langle s_1, s_3, s_4, s_5, s_7 \rangle$ of length $\ell(u) = 8$. We
have $u \le v$ and $\ell(v) = 16$.

The following is stated without proof in \cite[\S 8.3]{KS}. We give the proof
here because it is a good warm-up for the calculation in $GL_{12}$
which we need to perform next.

\begin{prop} \label{prop:KS1}
  The singularity of $Z_v$ along $C_u$ is isomorphic to $S$.
\end{prop}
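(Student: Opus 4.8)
The plan is to use the explicit description of normal slices from \S3.2. First I would write down $N_u \subset \dot u U_-$ for the permutation $u = 21654387$: since $u$ sends $1\mapsto 2, 2\mapsto 1, 3\mapsto 6, 4\mapsto 5, 5\mapsto 4, 6\mapsto 3, 7\mapsto 8, 8\mapsto 7$, the matrix $\dot u$ has its nonzero entries in positions $(2,1),(1,2),(6,3),(5,4),(4,5),(3,6),(8,7),(7,8)$, and the slice $N_u$ consists of matrices $g\in\dot u U_-$ with $g_{i,j}=0$ for $i<u_j$ --- i.e. the free coordinates of $g$ sit strictly below the pivot in each column. Counting these gives an affine space; I expect it to have dimension $16 = \ell(v) - \ell(u) + (\text{correction})$, or rather, the relevant intersection $N_u\cap\pi^{-1}(Z_v)$ will have dimension $8$ matching $\dim S$. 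I would lay out the free entries of $g$ and observe that they naturally organize into four $2\times 2$ blocks $M_0,M_1,M_2,M_3$ --- this block structure is forced by the shape of the staircase pattern of $u$ (the pattern $21|65|43|87$ pairs up consecutive coordinates).

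Next I would impose the rank conditions \eqref{rankconds} for $v = 62845173$. For each $1\le a,b\le 8$ I compute $r_{a,b} := \rank((\dot v_{i,j})_{a\le i\le n,\,1\le j\le b})$ from the permutation matrix of $v$, and then translate $\rank((g_{i,j})_{a\le i\le n,\,1\le j\le b})\le r_{a,b}$ into explicit equations on the block entries. Many of these conditions will be vacuous (automatically satisfied because of the pivot-$1$'s and $0$'s already present in $\dot u U_-$ and in $N_u$); the surviving ones, I claim, are precisely $\det M_i = 0$ and $M_i M_{i+1}=0$ for $i\in\ZM/4\ZM$. The cyclic $\ZM/4\ZM$ symmetry should emerge from the symmetry of $v$ (note $v$ has a rotational structure matching the $4$-periodicity). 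I would present this as: the southwest-rank drop on a $2\times 2$ window straddling two consecutive blocks forces the vanishing of the product $M_iM_{i+1}$, while a rank-$1$ condition on a window containing a single block $M_i$ forces $\det M_i = 0$.

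The main obstacle is verifying \emph{completeness} --- that the equations I extract from \eqref{rankconds} cut out exactly $S$ and nothing smaller, and conversely that no rank condition I've discarded as "vacuous" actually imposes something. Concretely I must check: (i) every one of the (at most) $64$ rank inequalities either holds identically on $N_u$, or reduces to one of the eight stated equations (possibly with repetitions), and (ii) conversely each of the eight equations $\det M_i=0$, $M_iM_{i+1}=0$ does arise from some rank condition. The paper flags exactly this point ("the fact that the equations \eqref{rankconds} are complete"), so I would organize the bookkeeping carefully: tabulate $r_{a,b}$ for $v$, tabulate which entries of $g\in N_u$ are free versus $0$ versus $1$, and go through the windows systematically, grouping by which blocks they touch. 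Once the set of equations is pinned down, the isomorphism $N_u\cap\pi^{-1}(Z_v)\xrightarrow{\sim} S$ sending $g$ to $(M_0,M_1,M_2,M_3)$ is immediate from the construction, and the dimension count ($\dim S = 8$, and the slice is visibly $8$-dimensional once the blocks are identified) confirms we have not lost a component.

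A useful sanity check along the way: since $u$ is the longest element of the parabolic $\langle s_1,s_3,s_4,s_5,s_7\rangle$ and $\ell(v)-\ell(u)=8$, the expected dimension of the slice is $8$, agreeing with $\dim S$; and the fact (quoted from \cite{KS}) that this is stated there without proof means I only need a clean self-contained derivation, which the $N_u$-versus-rank-conditions machinery of \S3.2 supplies directly. I would keep the exposition terse, displaying the block form of a general element of $N_u$ and the reduced list of equations, and leave the routine window-by-window rank verification to the reader with a remark that it is "straightforward but tedious", exactly as the surrounding text licenses.
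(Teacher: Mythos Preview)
Your overall strategy matches the paper's: write $N_u$ in block form, impose the rank conditions \eqref{rankconds} for $v$, and identify the result with $S$. However, two concrete points in your sketch do not survive the actual computation.

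First, the normal slice $N_u$ has dimension $\binom{8}{2} - \ell(u) = 20$, not $16$, and its free entries organise into \emph{five} $2\times 2$ blocks, not four. In the paper's notation these are $A_0, A_1, A_2, A_3, A_4$ sitting in
\[
\begin{pmatrix} J & 0 & 0 & 0 \\ A_1 & 0 & J & 0 \\ A_2 & J & 0 & 0 \\ A_0 & A_3 & A_4 & J \end{pmatrix}, \qquad J = \left(\begin{smallmatrix} 0 & 1 \\ 1 & 0 \end{smallmatrix}\right).
\]
One of the rank conditions forces $A_0 = 0$, leaving four blocks --- but this is an equation to be extracted, not part of the slice structure.

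Second, and this is the substantive gap, the surviving rank conditions do \emph{not} read off directly as $\det M_i = 0$ and $M_i M_{i+1} = 0$. What one actually obtains (after reducing the rank inequalities via row and column operations) are relations of the form
\[
A_3 J A_2 = 0, \quad A_4 J A_1 = 0, \quad A_2 K A_1^t = 0, \quad A_4^t K A_3 = 0, \quad \det A_i = 0,
\]
where $K = \left(\begin{smallmatrix} 0 & -1 \\ 1 & 0 \end{smallmatrix}\right)$. These involve both $J$ and $K$, and transposes on two of the four blocks. The isomorphism with $S$ requires the change of variables
\[
A_1' = A_1^t J, \quad A_2' = A_2 K, \quad A_3' = A_3 J, \quad A_4' = A_4^t K,
\]
after which the relations become $A_{i+1}' A_i' = 0$ in the cyclic form defining $S$. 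Your proposed map $g \mapsto (M_0,M_1,M_2,M_3)$ is not the isomorphism, and the $\ZM/4\ZM$ symmetry you hope to see directly in $v$ only becomes visible after this twist. Recognising the need for (and finding) this change of variables is the one non-routine step in the argument, and your plan treats it as if it were absent.
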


\begin{proof} If $J := \left ( \begin{matrix} 0 & 1 \\ 1 & 0 \end{matrix} \right
)$ we have (as a matrix of block $2 \times 2$-matrices):
\[
N_u := \left \{ 
\left ( \begin{matrix}
J & 0 & 0 & 0 \\
A_1 & 0 & J & 0 \\
A_2 & J & 0 & 0 \\
A_0 & A_3 & A_4 & J \end{matrix} \right )  \; \middle | \; A_i \in M_2(\CM) \right \}. \]
Now
\[
\tiny \dot{v} = \left ( \begin{array}{cc|cc|cc|cc}
0 & 0 & 0 & 0 & 0 & 1 & 0 & 0 \\
0 & 1 & 0 & 0 & 0 & 0 & 0 & 0 \\
\hline
0 & 0 & 0 & 0 & 0 & 0 & 0 & 1 \\
0 & 0 & 0 & 1 & 0 & 0 & 0 & 0 \\
\hline
0 & 0 & 0 & 0 & 1 & 0 & 0 & 0 \\
1 & 0 & 0 & 0 & 0 & 0 & 0 & 0 \\
\hline
0 & 0 & 0 & 0 & 0 & 0 & 1 & 0 \\
0 & 0 & 1 & 0 & 0 & 0 & 0 & 0
\end{array} \right )
\]
and after some checking one sees that the rank conditions
\eqref{rankconds} reduce to the following equations:
\begin{gather}
A_0 = 0, \\
\label{eq:rank23} \rank \left ( \begin{matrix} A_2 & J \\ 0 & A_3 \end{matrix} \right )
\le 2, \\
\label{eq:rank12} \rank \left ( \begin{matrix} A_1 \\ A_2 \end{matrix} \right ) \le 1, \\
\label{eq:rank34} \rank \left ( \begin{matrix} A_3 & A_4 \end{matrix} \right ) \le 1, \\
\label{eq:rank1234} \rank \left ( \begin{matrix} A_1 & 0 & J \\ A_2 & J & 0 \\ 0 & A_3 &
    A_4 \end{matrix} \right ) \le 4.
\end{gather}

Now
\[
\left ( \begin{matrix} A_2 & J \\ 0 & A_3 \end{matrix} \right ) 
\left ( \begin{matrix} I & 0 \\ -JA_2 & J \end{matrix} \right ) =
\left ( \begin{matrix} 0 & I \\ -A_3JA_2 & A_3J \end{matrix} \right ) 
\]
and so \eqref{eq:rank23} is equivalent to $A_3JA_2 = 0$. Similarly one
may show that together \eqref{eq:rank23} and \eqref{eq:rank1234} are equivalent
to the conditions
\begin{equation} \label{eq:firstreduction1}
A_3JA_2 = 0 \text{ and } A_4JA_1 = 0.
\end{equation}
If we let $K := \left ( \begin{matrix} 0 & -1 \\ 1 & 0 \end{matrix}
\right )$ then \eqref{eq:rank12} is equivalent to the conditions:
\[
A_2KA_1^t = 0 \text{ and } \det A_1 = \det A_2 = 0.
\]
Similarly, \eqref{eq:rank34} is equivalent to the conditions
\[
A_4^tKA_3 = 0 \text{ and } \det A_3 = \det A_4 = 0
\]
Hence if we set
\[
A_1' := A_1^tJ, \quad A_2' := A_2K, \quad A_3' := A_3J, \quad A'_4 := A_4^tK.
\]
Then the relations become
\begin{gather*}
\det A_i' = 0 \text{ for $i \in \{1, 2, 3, 4 \}$,} \\
A_2'A_1' = A_3'A_2' = A_4'A_3' = A_1'A_4' = 0.
\end{gather*}
This is clearly isomorphic to the Kashiwara-Saito singularity.
\end{proof}

\subsection{Realisation in $GL_{12}$} \label{sec:GL12}

We will see how to realise the Kashiwara-Saito
singularity as a normal slice in $Z_y$ to a Schubert cell $C_x$. This time $x$ and $y$ belong to the same right cell.

Now let $n = 12$. 
Consider the permutations in $S_{12}$:
\[
x = 438721a965cb, \quad y = 4387a2c691b5.
\]
(Remember that we use string notation and $a = 10$, $b = 11, c = 12$.)
The Robinson-Schensted $P$ and $Q$ symbols of $x$ are
\[
P(x) = \hspace{3pt}
\begin{matrix} 1 & 5 & 9 & b \\
2 & 6 & a & c \\
3 & 7 \\
4 & 8 \end{matrix} \quad \text{and} \quad
Q(x) = \hspace{3pt}
\begin{matrix} 1 & 3 & 7 & b \\
2 & 4 & 8 & c \\
5 & 9 \\
6 & a \end{matrix} \; \;.
\]
The $P$ and $Q$ symbols of $y$ are
\[
P(y) = \hspace{3pt}
\begin{matrix} 1 & 5 & 9 & b \\
2 & 6 & a & c \\
3 & 7 \\
4 & 8 \end{matrix} \quad \text{and} \quad
Q(y) = \hspace{3pt}
\begin{matrix} 1 & 3 & 5 & 7 \\
2 & 4 & 9 & b \\
6 & 8 \\
a & c \end{matrix} \; \;.
\]
In particular, we conclude that $x$ and $y$ are in the same two-sided
cell (even the same right cell).

Reduced expressions for $x$ and $y$ are given by:
\begin{align*}
x & =
s_bs_5s_6s_7s_8s_9s_5s_6s_7s_8s_7s_1s_2s_3s_4s_5s_1s_2s_3s_4s_3s_1\\
y & = 
s_5s_6s_7s_8s_9s_as_bs_as_1s_2s_3s_4s_5s_6s_7s_8s_9s_7s_8s_4s_5s_6s_7s_1s_2s_
3s_4s_5s_3s_1
\end{align*}
We have $x \le y$, $\ell(x) = 22$ and $\ell(y) = 30$.

Recall the Kashiwara-Saito singularity $S$ from the previous section.

\begin{prop}
  The singularity of $Z_y$ along $C_x$ is isomorphic to $S$.
\end{prop}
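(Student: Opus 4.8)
The strategy mirrors exactly the $GL_8$ calculation in Proposition~\ref{prop:KS1}, but in a larger matrix. First I would write down the normal slice $N_x \subset \dot{x}U_-$ explicitly using the recipe from the previous subsection: $N_x$ is the affine space of matrices $g \in \dot x U_-$ with $g_{i,j} = 0$ for $i < x_j$. With the given $x = 438721a965cb$ one should, after a suitable ordering of rows and columns into $2\times 2$ blocks (the block structure being dictated by the ``staircase'' shape of $x$, which has the same cycle-type as in the $GL_8$ case up to the extra coordinates), obtain a block matrix whose free parameters are again four $2 \times 2$ matrices $A_0, A_1, A_2, A_3, A_4$ (or however many the shape produces) together with fixed blocks equal to $J$ or $0$. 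The key point is that $x$ was reverse-engineered from the Kashiwara--Saito singularity, so this block form is guaranteed to be benign; I would simply record it.

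Next I would compute the permutation matrix $\dot{y}$ for $y = 4387a2c691b5$ and substitute $N_x$ into the rank conditions \eqref{rankconds}. As in the $GL_8$ case, most of the $n^2$ rank inequalities are automatically satisfied (because on $N_x$ the relevant submatrices already have the generic expected rank), and only a handful of them cut out the singularity. I would identify this handful and translate each one, exactly as in the proof of Proposition~\ref{prop:KS1}, into a polynomial equation in the $A_i$: the vanishing-determinant conditions $\det A_i = 0$ will come from ``rank $\le 1$'' conditions on $2\times 4$ or $4\times 2$ blocks built from two of the $A_i$ and a $J$, while the quadratic ``$M_iM_{i+1}=0$'' conditions will come from ``rank $\le 2$'' or ``rank $\le 4$'' conditions on larger blocks, cleared by right-multiplication by an elementary block matrix of the form $\left(\begin{smallmatrix} I & 0 \\ \ast & J \end{smallmatrix}\right)$ as in the displayed identity in that proof.

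Finally, after collecting the reduced equations I would perform the same change of variables $A_i' := (\text{transpose and/or}) A_i \cdot (J \text{ or } K)$, where $K = \left(\begin{smallmatrix} 0 & -1 \\ 1 & 0 \end{smallmatrix}\right)$, to bring the relations into the canonical shape $\det A_i' = 0$ for $i \in \ZM/4\ZM$ and $A_{i+1}'A_i' = 0$ for $i \in \ZM/4\ZM$, which is by definition isomorphic to $S$. The main obstacle, and the only genuinely laborious part, is the same one flagged in the text for the complete-ness of \eqref{rankconds}: verifying that the rank conditions really do reduce to precisely the stated short list and nothing more — i.e. checking that no other rank inequality imposes an extra constraint and that the ones that matter are handled correctly. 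This is a finite, mechanical check (there are only $12^2$ inequalities, the vast majority of which are vacuous on $N_x$ by inspection of which entries are forced to be $0$, $1$, or free), but it is bookkeeping-heavy; everything after it is the routine linear algebra already illustrated in Proposition~\ref{prop:KS1}.
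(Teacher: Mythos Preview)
Your plan is exactly the paper's approach. One concrete point you did not anticipate: for this $x$ the normal slice $N_x$ has dimension $\binom{12}{2}-\ell(x)=44$, so there are \emph{eleven} free $2\times 2$ blocks, not four or five; in the paper's notation these are $A_1,\dots,A_4$ and $B_1,\dots,B_7$, arranged as
\[
N_x=\left\{\begin{pmatrix}
0 & 0 & J & 0 & 0 & 0\\
J & 0 & 0 & 0 & 0 & 0\\
B_1 & 0 & A_1 & 0 & J & 0\\
B_2 & J & 0 & 0 & 0 & 0\\
B_3 & B_5 & A_2 & J & 0 & 0\\
B_4 & B_6 & B_7 & A_3 & A_4 & J
\end{pmatrix}\right\}.
\]
The rank conditions \eqref{rankconds} for $\dot y$ then force all $B_i=0$, and the remaining conditions on the $A_i$ are \emph{literally} the four rank inequalities \eqref{eq:rank23}--\eqref{eq:rank1234} from the $GL_8$ proof. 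So you do not need to redo the change of variables $A_i\mapsto A_i'$ at all: once the $B_i$ vanish, the paper simply cites Proposition~\ref{prop:KS1} and is done. Your planned final step is correct but redundant.
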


\begin{proof} The normal slice $N_x$ to $C_x$ inside the full flag variety is
  given by the space of matrices
\[
\left ( \begin{matrix} 
0 & 0 & J & 0 & 0 & 0 \\
J & 0 & 0 & 0 & 0 & 0 \\
B_1 & 0 & A_1 & 0 & J & 0 \\
B_2 & J & 0 & 0 & 0 & 0 \\
B_3 & B_5 & A_2 & J & 0 & 0 \\
B_4 & B_6 & B_7 & A_3 & A_4 &  J \end{matrix} \right )
\]
where $J := \left ( \begin{matrix} 0 & 1 \\ 1 & 0 \end{matrix} \right
)$ as above and the $A_i$, $B_i$ are in $M_2(\CM)$. Now, after some
checking one sees that the rank conditions \eqref{rankconds} give that  
 the intersection of $N_x$ and
$Z_y$ is cut out by the equations:
\begin{gather}
B_i = 0 \\
\rank \left (\begin{matrix} A_3 & A_4 \end{matrix} \right ) \le 1,\\
\rank \left ( \begin{matrix} 0 & A_1 \\ J & 0 \\ 0 & A_2 \end{matrix}
\right ) \le 3 \quad \Leftrightarrow \quad \rank \left ( \begin{matrix} A_1 \\
    A_2 \end{matrix} \right ) \le 1, \\
\rank \left ( \begin{matrix} A_2 & J \\ 0 & A_3 \end{matrix} \right )
\le 2, \\
\rank \left ( \begin{matrix} 0 & A_1 & 0 & J \\ J & 0 & 0 & 0 \\ 0 &
    A_2 & J & 0 \\ 0 & 0 & A_3 & A_4 \end{matrix} \right ) \le 6 \quad
\Leftrightarrow \quad 
\rank \left ( \begin{matrix} A_1 & 0 & J \\ 
    A_2 & J & 0 \\ 0 & A_3 & A_4 \end{matrix} \right ) \le 4.
\end{gather}
Looking at the proof of Proposition \ref{prop:KS1} it is now clear
that $N \cap Z_y \cong S$, the Kashiwara-Saito singularity.
\end{proof}

\def\cprime{$'$}


\begin{thebibliography}{{Tan}88}

\bibitem[AR14a]{AR1}
P.~Achar and S.~Riche.
\newblock Modular perverse sheaves on flag varieties {I}: tilting and parity
  sheaves.
\newblock preprint \href{http://arxiv.org/abs/1401.7245}{arXiv:1401.7245},
  2014.

\bibitem[AR14b]{AR2}
P.~Achar and S.~Riche.
\newblock Modular perverse sheaves on flag varieties {II}: {K}oszul duality and
  formality.
\newblock preprint \href{http://arxiv.org/abs/1401.7256}{arXiv:1401.7256},
  2014.

\bibitem[BB85]{BoBr}
W.~{Borho} and J.-L. {Brylinski}.
\newblock {Differential operators on homogeneous spaces. III: Characteristic
  varieties of Harish Chandra modules and of primitive ideals.}
\newblock {\em {Invent. Math.}}, 80:1--68, 1985.

\bibitem[BBD82]{BBD}
A.~Be{\u\i}linson, J.~Bernstein, and P.~Deligne.
\newblock Faisceaux pervers.
\newblock In {\em Analyse et topologie sur les espaces singuliers, I (Luminy,
  1981)}, volume 100 of {\em Ast\'erisque}, pages 5--171. Soc. Math. France,
  Paris, 1982.

\bibitem[BCP97]{magma}
W.~Bosma, J.~Cannon, and C.~Playoust.
\newblock The {M}agma algebra system. {I}. {T}he user language.
\newblock {\em J. Symbolic Comput.}, 24(3-4):235--265, 1997.
\newblock Computational algebra and number theory (London, 1993).

\bibitem[EW13]{EW}
B.~Elias and G.~Williamson.
\newblock Soergel calculus.
\newblock preprint \href{http://arxiv.org/abs/1309.0865}{arXiv:1309.0865},
  2013.

\bibitem[Ful92]{Fulton}
W.~Fulton.
\newblock Flags, {S}chubert polynomials, degeneracy loci, and determinantal
  formulas.
\newblock {\em Duke Math. J.}, 65(3):381--420, 1992.

\bibitem[Gab82]{Gabber}
O.~Gabber.
\newblock Equidimensionalit\'e de la vari\'et\'e caract\'eristique.
\newblock Expos\'e de O. Gabber redig\'e par T. Levasseur, 1982.

\bibitem[HN12]{HNPaper}
R.~B. Howlett and V.~M. Nguyen.
\newblock {$W$}-graph ideals.
\newblock {\em J. Algebra}, 361:188--212, 2012.

\bibitem[HN13]{HN}
R.~Howlett and V.~M. Nguyen.
\newblock W-graph magma programs.
\newblock software available at \href{
  http://www.maths.usyd.edu.au/u/bobh/magma/}{http://www.maths.usyd.edu.au/u/bobh/magma/},
  2013.

\bibitem[HW14]{HW}
X.~He and G.~Williamson.
\newblock {S}oergel calculus and {S}chubert calculus.
\newblock in preparation, 2014.

\bibitem[JMW09]{JMW2}
D.~Juteau, C.~Mautner, and G.~Williamson.
\newblock Parity sheaves.
\newblock preprint \href{http://arxiv.org/abs/0906.2994}{arXiv:0906.2994}, to
  appear in JAMS, 2009.

\bibitem[{Jos}84]{Joseph}
A.~{Joseph}.
\newblock {On the variety of a highest weight module.}
\newblock {\em {J. Algebra}}, 88:238--278, 1984.

\bibitem[Jut09]{decperv}
D.~Juteau.
\newblock Decomposition numbers for perverse sheaves.
\newblock {\em Ann. Inst. Fourier (Grenoble)}, 59(3):1177--1229, 2009.

\bibitem[KL80a]{KLCC}
D.~{Kazhdan} and G.~{Lusztig}.
\newblock {A topological approach to Springer's representations.}
\newblock {\em {Adv. Math.}}, 38:222--228, 1980.

\bibitem[KL80b]{KL2}
D.~Kazhdan and G.~Lusztig.
\newblock Schubert varieties and {P}oincar\'e duality.
\newblock In {\em Geometry of the {L}aplace operator ({P}roc. {S}ympos. {P}ure
  {M}ath., {U}niv. {H}awaii, {H}onolulu, {H}awaii, 1979)}, Proc. Sympos. Pure
  Math., XXXVI, pages 185--203. Amer. Math. Soc., Providence, R.I., 1980.

\bibitem[KS97]{KS}
M.~{Kashiwara} and Y.~{Saito}.
\newblock {Geometric construction of crystal bases.}
\newblock {\em {Duke Math. J.}}, 89(1):9--36, 1997.

\bibitem[KT84]{KT}
M.~{Kashiwara} and T.~{Tanisaki}.
\newblock {The characteristic cycles of holonomic systems on a flag manifold
  related to the Weyl group algebra.}
\newblock {\em {Invent. Math.}}, 77:185--198, 1984.

\bibitem[Mel93]{Mel}
A.~Melnikov.
\newblock Irreducibility of the associated varieties of simple highest weight
  modules in {$\mathfrak{sl}(n)$}.
\newblock {\em C. R. Acad. Sci. Paris S\'er. I Math.}, 316(1):53--57, 1993.

\bibitem[Soe97]{SoeKL}
W.~Soergel.
\newblock Kazhdan-{L}usztig polynomials and a combinatoric[s] for tilting
  modules.
\newblock {\em Represent. Theory}, 1:83--114 (electronic), 1997.

\bibitem[Soe00]{Soe}
W.~Soergel.
\newblock On the relation between intersection cohomology and representation
  theory in positive characteristic.
\newblock {\em J. Pure Appl. Algebra}, 152(1-3):311--335, 2000.

\bibitem[Spr82]{SpIH}
T.~A. Springer.
\newblock Quelques applications de la cohomologie d'intersection.
\newblock In {\em Bourbaki {S}eminar, {V}ol. 1981/1982}, volume~92 of {\em
  Ast\'erisque}, pages 249--273. Soc. Math. France, Paris, 1982.

\bibitem[{Tan}88]{T}
T.~{Tanisaki}.
\newblock {Characteristic varieties of highest weight modules and primitive
  quotients.}
\newblock {Representations of Lie groups: analysis on homogeneous spaces and
  representations of Lie groups, Proc. Symp., Kyoto/Jap. and Hiroshima/Jap.
  1986, Adv. Stud. Pure Math. 14, 1-30 (1988).}, 1988.

\bibitem[VW12]{VW}
K.~Vilonen and G.~Williamson.
\newblock Characteristic cycles and decomposition numbers.
\newblock preprint \href{http://arxiv.org/abs/1208.1198}{arXiv:1208.1198}, to
  appear in Math. Res. Let., 2012.

\bibitem[Wil12]{Wil}
G.~Williamson.
\newblock Modular intersection cohomology complexes on flag varieties.
\newblock {\em Math. Z.}, 272(3-4):697--727, 2012.
\newblock (with an appendix by Tom Braden).

\bibitem[WY08]{WY1}
A.~Woo and A.~Yong.
\newblock Governing singularities of {S}chubert varieties.
\newblock {\em J. Algebra}, 320(2):495--520, 2008.

\bibitem[WY12]{WY2}
A.~Woo and A.~Yong.
\newblock A {G}r\"obner basis for {K}azhdan-{L}usztig ideals.
\newblock {\em Amer. J. Math.}, 134(4):1089--1137, 2012.

\end{thebibliography}
\end{document}